\documentclass[11pt]{amsart}

\usepackage[utf8]{inputenc}
\usepackage{mathrsfs}
\usepackage{tikz-cd}
\usepackage{amsmath}
\usepackage{amsthm}
\usepackage{dsfont}
\usepackage{amssymb}
\usepackage{url}
\usepackage{hyperref}
\usepackage{tabularx}

\newcommand{\opn}{\operatorname}

\newcommand{\Nat}{\mathbb{N}}
\newcommand{\Ghauss}{G\opn{-CGWH}}

\newcommand{\GCW}{G\opn{-CW}}
\newcommand{\Map}{\opn{Map}}
\newcommand{\Space}{\opn{Spc}}
\newcommand{\colim}{\opn{colim}}
\newcommand{\hocolim}{\opn{hocolim}}
\newcommand{\tot}{\opn{tot}}

\theoremstyle{thmstyleone}
\newtheorem{theorem}{Theorem}[section]
\newtheorem{proposition}[theorem]{Proposition}
\newtheorem{corollary}[theorem]{Corollary}
\newtheorem{lemma}[theorem]{Lemma}

\theoremstyle{thmstyletwo}
\newtheorem{example}[theorem]{Example}
\newtheorem{remark}[theorem]{Remark}

\theoremstyle{thmstylethree}
\newtheorem{definition}[theorem]{Definition}

\title{$\Ghauss$ spaces and their (homotopy) colimits}
\author{Aleksandar Miladinović}

\begin{document}

\begin{abstract}
In this paper we take a look at compactly generated weak Hausdorff spaces equipped with an action of a compact Lie group $G$ together with their colimits and homotopy colimits. In particular, we investigate relations between (homotopy) colimits and mapping spaces, and consequently, homotopy groups.
\end{abstract}

\maketitle
\tableofcontents

\textbf{2020 Mathematics subject classification:} 55P91, 55U40, 18A30, 54B30

\textbf{Keywords and phrases:} Homotopy colimits, equivariant homotopy groups, compactly generated spaces.

\section{Introduction}

Compactly generated spaces were popularized by Steenrod in \cite{Ste67} and later by McCord in \cite{McC69} who added the weak Hausdorff condition. Steenrod referred to the compactly generated spaces as a \emph{convenient category of spaces}, which is further supported by the fact that much of the literature today on equivariant and stable homotopy theory uses compactly generated spaces. These spaces have been extensively studied and provide a well-behaved setting for many topological constructions.

This paper is motivated by the study of homotopy colimits in the context of compactly generated $G$-spaces, where $G$ is a compact Lie group. In particular, we focus on the relations between homotopy colimits, equivariant mapping spaces, and homotopy groups. These results appear to play an important role in understanding the $\infty$-category of $G$-spaces, yet detailed references are often absent or scattered in the existing literature. The goal of this paper is to present a coherent collection of such results, intended to serve as a useful resource for future work in equivariant homotopy theory and related areas.

We will open this paper with a section on model categories where our main focus will be on the functorial presentation of the homotopy colimit. The following section will be dedicated to the compactly generated weak Hausdorff spaces equipped with an action of a compact Lie group $G$ ($\Ghauss$ spaces for short). In that section, we will focus on the interaction of the mapping spaces, and consequently homotopy groups with colimits and homotopy colimits. Let us recall that, if $X$ is a colimit of a sequence of closed inclusions of spaces $X_1\rightarrow X_2\rightarrow ...$ then the natural map
\[
\underset{n\in\Nat}{\colim}\, \pi_k(X_n)\rightarrow \pi_k(X)
\]
is an isomorphism (see \cite[Ch. 8, section 4]{MCC}). We will show that this isomorphism can be upgraded. In particular, we will show that, given a sequence of closed inclusions of $\Ghauss$-spaces $X_1\rightarrow X_2\rightarrow ...$ and a closed subgroup $H\leq G$, the natural map
\[
\underset{n\in\Nat}{\colim}\, \pi^H_k(X_n) \rightarrow \pi^H_k(\underset{n\in\Nat}{\hocolim}\, X_n)
\]
is an isomorphism.

\section{Model categories}

The theory of model categories was developed by Quillen in \cite{Qui67} and \cite{Qui69}. Formally, a model category is an ordinary category with three specified classes of morphisms: \emph{weak equivalences}, \emph{fibrations} and \emph{cofibrations}, which are subject to a certain set of axioms. The main advantage of model categories is that they provide us with decent machinery with which we can operate in homotopy theory. Some good references are \cite{Hov}, \cite{Hir}, \cite{DS95} and \cite{Gin}, but there many others.

\begin{definition}
Let $M$ be an ordinary category and let $q:A\rightarrow B$ and $f:X\rightarrow Y$ be two morphisms in $M$. The morphism $q$ is called a \emph{retract} of $f$ if there is a commutative diagram
\begin{center}
\begin{tikzcd} [row sep=2em, column sep=2em]
A \arrow[r]\arrow[rr, bend left, "id_A"] \arrow[d, "q"] & X \arrow[d,"f"]\arrow[r] & A \arrow[d,"q"]\\
B \arrow[r]\arrow[rr,bend right, "id_B"] & Y \arrow[r] & B
\end{tikzcd}
\end{center}
\end{definition}

\begin{remark}
If we take $M$ to be a category of topological spaces and $q=id_A$, $f=id_X$ with $A\subseteq X$, then the notion of $id_A$ being a retract of $id_X$ corresponds to the notion of space $A$ being a retract of a topological space $X$.
\end{remark}

\begin{definition}
A \emph{model category} is a category $M$ equipped with three classes of morphisms:
\begin{itemize}
\item class of \emph{weak equivalences} $\mathcal{W}$ (which are denoted with  $\overset{\sim}{\rightarrow}$),
\item class of \emph{fibrations} $\mathcal{F}$ (denoted with $\twoheadrightarrow$),
\item class of \emph{cofibrations} $\mathcal{C}$ (denoted with $\rightarrowtail$),
\end{itemize}
which satisfy the following $5$ axioms:
\begin{enumerate}
\item $M$ is cocomplete;
\item ($2$ out of $3$ property) For every commutative diagram
\begin{center}
\begin{tikzcd} [row sep=2em, column sep=2em]
 & B \arrow[rd] & \\
A \arrow[rr]\arrow[ru] & & C
\end{tikzcd}
\end{center}
if two of the three morphisms are weak equivalences then so is the third;
\item If $q$ is a retract of $f$ such that $f\in\mathcal{W}$ (resp. $f\in\mathcal{F}$, $f\in\mathcal{C}$), then $q\in\mathcal{W}$ (resp. $q\in\mathcal{F}$, $q\in\mathcal{C}$);
\item For every commutative diagram
\begin{center}
\begin{tikzcd} [row sep=2em, column sep=2em]
 A \arrow[d,rightarrowtail,"i"] \arrow[r] & B \arrow[d, twoheadrightarrow,"p"] \\
 C \arrow[r]\arrow[ur,dashed] & D
\end{tikzcd}
\end{center}
such that $i\in\mathcal{C}$ and $p\in\mathcal{F}$ the dashed lift exists if $i$ or $p$ is in addition a weak equivalence;
\item Every morphism $f:X\rightarrow Y$ admits two natural factorizations
\begin{align*}
X & \overset{\sim}{\rightarrowtail} P_f \twoheadrightarrow Y \text{ , and}\\
X & \rightarrowtail C_f \overset{\sim}{\twoheadrightarrow} Y
\end{align*}
\end{enumerate}
We will write $(M,\mathcal{W},\mathcal{F},\mathcal{C})$ for a model category $M$ to indicate the classes of weak equivalences, fibrations and cofibrations. Additionally, when the model structure is established, or known, we will simply write $M$, omitting the classes of morphisms.
\end{definition}
By axiom $1$ every model category $M$ has an initial and a final object since these objects correspond to the limit and colimit of empty diagrams respectively. For future reference, let us denote with $0$ the initial and with $\{\ast\}$ the terminal object of $M$.
\begin{definition}
Let $X\in M$ where $M$ is a model category. We say that $X$ is
\begin{itemize}
\item \emph{fibrant} if we have $X\twoheadrightarrow \{\ast\}$;
\item \emph{cofibrant} if we have $0\rightarrowtail X$.
\end{itemize}
\end{definition}

\begin{definition}
For every $X\in M$ consider the maps $0\rightarrow X$ and $X\rightarrow \{\ast\}$. Axiom $5$ gives us a factorization
\begin{align*}
X & \overset{\sim}{\rightarrowtail} P_X \twoheadrightarrow \{\ast\} \text{ , and}\\
0 & \rightarrowtail C_X \overset{\sim}{\twoheadrightarrow} X \, .
\end{align*}
We call $P_X$ (resp. $C_X$) the \emph{fibrant} (resp. \emph{cofibrant}) replacement of $X$.
\end{definition}

\begin{remark}
Given a model category $M$, we will denote with $M_f$ (resp. $M_c$) the full subcategory spanned by fibrant (resp. cofibrant) objects. By axiom $5$ we have functors
\begin{align*}
R: & M\rightarrow M_f\\
L: & M\rightarrow M_c
\end{align*}
such that $R(X) = P_X$ and $L(X) = C_X$.
\end{remark}

\begin{example}
\label{quillenms}
\textbf{Quillen model category:} Quillen gave a proof that the category of topological spaces $\operatorname{Top}$ can be endowed with the structure of a model category with:
\begin{itemize}
\item the class of weak equivalences being \emph{weak homotopy equivalences},
\item fibrations being \emph{Serre fibrations}, and
\item cofibrations being retracts of general cellular inclusions.
\end{itemize}
It is worth noting that in this structure every object is fibrant.
\end{example}

\begin{definition}
Let $M$ be a model category with $\mathcal{W}$ the class of weak equivalences. We define the \emph{homotopy category} of $M$ to be the localization of $M$ at the class of maps $\mathcal{W}$ and we denote it with
\[
Ho(M) := M[\mathcal{W}^{-1}]
\]
Note that we also have the projection functor $\pi: M\rightarrow Ho(M)$.
\end{definition}

\begin{lemma}
Let $M$ be a model category. The canonical inclusions $M_f\hookrightarrow M$ and $M_c\hookrightarrow M$ induce equivalences of homotopy categories
\begin{align*}
Ho(M_f) & \xrightarrow{\simeq} Ho(M)\\
Ho(M_c) & \xrightarrow{\simeq} Ho(M)\\
\end{align*}
\end{lemma}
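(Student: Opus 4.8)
The plan is to produce explicit inverse equivalences built from the replacement functors $R,L$ of the preceding Remark together with the natural weak equivalences supplied by the \emph{functorial} factorizations of Axiom $5$. Write $i\colon M_f\hookrightarrow M$ and $j\colon M_c\hookrightarrow M$ for the inclusions. I will treat the fibrant case in detail; the cofibrant case is formally dual. First I would record the natural transformation $\eta\colon \mathrm{id}_M\Rightarrow i\circ R$ whose component $\eta_X\colon X\overset{\sim}{\rightarrowtail}P_X=R(X)$ is the acyclic cofibration in the natural factorization of $X\to\{\ast\}$; naturality is precisely the naturality of that factorization, and in particular each $\eta_X\in\mathcal{W}$. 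The key model-categorical step is to check that $R$ preserves weak equivalences: for a weak equivalence $f\colon X\to Y$ the naturality square
\begin{center}
\begin{tikzcd}[row sep=2em, column sep=2em]
X \arrow[r,"f"] \arrow[d,"\eta_X"] & Y \arrow[d,"\eta_Y"] \\
R(X) \arrow[r,"R(f)"] & R(Y)
\end{tikzcd}
\end{center}
has $\eta_X,\eta_Y,f\in\mathcal{W}$, so the $2$-out-of-$3$ axiom forces $R(f)\in\mathcal{W}$.

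With this in hand, both functors descend to homotopy categories by the universal property of localization. The inclusion $i$ carries weak equivalences of $M_f$ (which are exactly the weak equivalences of $M$ between fibrant objects) to isomorphisms in $Ho(M)$, so the composite $M_f\hookrightarrow M\to Ho(M)$ inverts $\mathcal{W}$ and induces $\bar\imath\colon Ho(M_f)\to Ho(M)$ with $\bar\imath\,\pi_f=\pi\,i$. Likewise, since $R$ preserves weak equivalences, the composite $M\xrightarrow{R}M_f\to Ho(M_f)$ inverts $\mathcal{W}$ and induces $\bar R\colon Ho(M)\to Ho(M_f)$ with $\bar R\,\pi=\pi_f\,R$.

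It then remains to see these are mutually inverse. Localizing $\eta$: because $\pi$ inverts weak equivalences, $\pi\eta$ is a natural isomorphism $\pi\Rightarrow\pi(iR)=(\bar\imath\,\bar R)\pi$, and since precomposition with the localization functor is fully faithful (the part of the universal property that governs natural transformations), this descends to a natural isomorphism $\mathrm{id}_{Ho(M)}\cong\bar\imath\,\bar R$. For the other composite I restrict $\eta$ to $M_f$: for fibrant $X$ the map $\eta_X\colon X\to P_X$ is a weak equivalence between fibrant objects, hence a weak equivalence \emph{in} $M_f$, so $\pi_f(\eta|_{M_f})$ is a natural isomorphism and descends to $\mathrm{id}_{Ho(M_f)}\cong\bar R\,\bar\imath$. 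The cofibrant case runs identically using $L$ and the natural weak equivalence $\epsilon\colon j\circ L\Rightarrow\mathrm{id}_M$ whose components $C_X\overset{\sim}{\twoheadrightarrow}X$ are the acyclic fibrations in the natural factorization of $0\to X$; only the direction of $\epsilon$ is reversed, while the $2$-out-of-$3$ argument and the descent of natural isomorphisms are the same.

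I would flag the main obstacle as the interplay of two points rather than any single hard computation: the genuinely model-categorical step that $R$ (resp.\ $L$) preserves weak equivalences, where the $2$-out-of-$3$ axiom together with functoriality of the factorizations is essential, and the careful transport of the natural weak equivalences through localization—specifically the need to invoke that precomposition with the localization functor is fully faithful, so that a natural isomorphism of functors landing in a localized category descends to a natural isomorphism between functors defined \emph{on} the localization.
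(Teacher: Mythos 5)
Your proposal is correct and complete, but there is nothing in the paper to compare it against: the lemma is stated bare, with no proof given. Your argument is the standard one and it fills this gap cleanly. Two remarks on why it works in this paper's setting. First, your whole construction hinges on the word \emph{natural} in axiom $5$: the paper does assume functorial factorizations (and uses them in the Remark defining $R$ and $L$), which is exactly what makes $\eta\colon \mathrm{id}_M\Rightarrow iR$ a genuine natural transformation and makes the $2$-out-of-$3$ step for $R(f)$ available; in the weaker axiomatization without functorial factorizations one would instead have to run the longer Quillen/Dwyer--Spalinski route through the classical homotopy category of bifibrant objects and homotopy classes of maps, so it is worth flagging that your shortcut is legitimate here precisely because of how the paper states axiom $5$. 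Second, you correctly identify the one genuinely delicate step, namely descending the natural isomorphism $\pi\eta$ along the localization: since the paper defines $Ho$ only via Gabriel--Zisman localization, the fact you need is that precomposition with $\pi$ embeds $\operatorname{Fun}(Ho(M),D)$ fully faithfully into the functors $M\to D$ inverting $\mathcal{W}$, which holds because $\pi$ is bijective on objects and morphisms of $Ho(M)$ are generated by images of morphisms of $M$ and formal inverses of weak equivalences. One small reading you make implicitly and should state: the paper defines $Ho(-)$ only for model categories, and $M_f$ is not one, so $Ho(M_f)$ must be interpreted as the localization of $M_f$ at the weak equivalences of $M$ between fibrant objects --- which is exactly the interpretation your restriction-of-$\eta$ argument uses.
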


Let us assume that, from now on, when choosing a colimit in a category, we are doing so in a functorial way. In such setting, we may define the homotopy colimit functorially as well, with such functor being \emph{derived} from the colimit functor. For this we will define what derived functors in a model category are.

\begin{definition}
Let $C$ and $D$ be two model categories ad let $F:C\rightarrow D$ and $G:D\rightarrow C$ be two functors. We define the following.
\begin{itemize}
\item \emph{Left derived} functor of $F$, $\mathbb{L}F:Ho(C)\rightarrow D$ to be the Left Kan extension of $F$ along $\pi_C : C\rightarrow Ho(C)$,
\item \emph{Right derived} functor of $G$, $\mathbb{R}G:Ho(D)\rightarrow C$ to be the Right Kan extension of $G$ along $\pi_D : D\rightarrow Ho(D)$,
\item \emph{Total} left derived functor of $F$, $\mathbb{L}^{\tot} F$ as the left derived functor of the composition $C\xrightarrow{F}D\xrightarrow{\pi_D}Ho(D)$,
\item \emph{Total} right derived functor of $G$, $\mathbb{R}^{\tot} G$ as the right derived functor of the composition $D\xrightarrow{G}C\xrightarrow{\pi_C}Ho(C)$.
\end{itemize}
\end{definition}

\begin{example}
\label{defhocolim}
The most important examples of total derived functors are \emph{homotopy} colimits.

Note that, given a category $K$ and a category $C$ we have a category of $K$-indexed diagrams $C^K$. Moreover, we have an adjunction $\colim: C^K \rightleftarrows C : c$ where $\colim$ is a functor which assigns to every $K$-diagram $X:K\rightarrow C$ its colimit and where $c$ is a constant functor.

If $C$ is a model category we can equip $C^K$ with the \emph{projective model structure} in which the weak equivalences (resp. fibrations) are those natural transformations $\tau: X\rightarrow Y$ (where $X,Y:K\rightarrow C$) such that $\tau_k : X(k)\rightarrow Y(k)$ is a weak equivalence (resp. fibration) for every $k\in K$. The cofibrations would be those natural transformations that have the left lifting property (LLP) with respect to all trivial fibrations (i.e. fibrations that are also weak equivalences).

Considering $\colim$ functor as a functor between two model categories we will denote with
\[
\hocolim := \mathbb{L}^{\tot} \colim
\]
the homotopy colimit functor. We can make an analogous definition for homotopy limits.
\end{example}

\begin{example}
\label{comphocolim}
Let $M$ be a model category, $K$ an ordinary category and $X:K\rightarrow M$ a $K$-diagram. As we have defined in \ref{defhocolim}, $\underset{K}{\hocolim}\, X = \mathbb{L}^{\tot} \underset{K}{\colim}\, X$. By the definition of derived functors $\underset{K}{\hocolim}\, X = \underset{K}{\colim}\, LX$ where $LX$ is the cofibrant replacement of the $K$-diagram $X$ in the projective model structure on $M^K$ (see, for example the proof of \cite[Proposition 2.5.13]{Gin}).

For example, let
\[
X_1\rightarrow X_2 \rightarrow ... \rightarrow X_n \rightarrow ...
\]
be a sequence in $M$. By inspection, the cofibrant replacement of given sequence is given by the sequence that fits into the following commutative diagram
\begin{center}
\begin{tikzcd} [row sep=2em, column sep=2em]
X_1 \arrow[r] & X_2 \arrow[r] & ... \arrow[r] & X_n \arrow[r] & ... \\
CX_1 \arrow[r]\arrow[u] & CX_2 \arrow[r]\arrow[u] & ... \arrow[r] & CX_n \arrow[r]\arrow[u] & ...
\end{tikzcd}
\end{center}
where $CX_i\rightarrow X_i$ is a weak equivalence and $CX_i \rightarrow CX_{i+1}$ is a cofibration for every $i\in\Nat$, thus
\[
\underset{i\in\mathbb{N}}{\hocolim}\, X_i = \underset{i\in\mathbb{N}}{\colim}\, CX_i \, .
\]
\end{example}

\section{$\Ghauss$ spaces}

\begin{definition}
Let us denote with $\Ghauss$ the category whose objects are compactly generated weak Hausdorff spaces equipped with an action of a compact Lie group $G$, and whose morphisms are continuous (not necessarily equivariant) maps.
\end{definition}

\begin{remark}
One may notice that when speaking of compactly generated weak Hausdorff spaces, one often omits the latter part, writing only \emph{compactly generated} and assuming that the spaces are weak Hausdorff by default (\cite{Rezk}, \cite[Appendix A]{Swe}), hence, when speaking of elements of the category $\Ghauss$ we will often write $\Ghauss$ space, or just compactly generated $G$-space.
\end{remark}

As noted in \cite[Appendix B]{Swe}, the category of $\Ghauss$ spaces is (co)complete. Using \cite[Proposition B.2 (i)]{Swe}, the orbit space $G/H$ is compactly generated, where $H\leq G$ is a closed subgroup, which gives us the following result.

\begin{proposition}
Every $\GCW$-complex is compactly generated.
\end{proposition}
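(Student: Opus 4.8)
The plan is to prove that every $\GCW$-complex is compactly generated by building it up cell-by-cell and using closure properties of the category of compactly generated (weak Hausdorff) spaces. The key observation, which the paragraph preceding the statement already supplies, is that each orbit $G/H$ for a closed subgroup $H \leq G$ is compactly generated by \cite[Proposition B.2 (i)]{Swe}. Since a $\GCW$-complex is assembled out of equivariant cells of the form $G/H \times D^n$, the strategy is to show that the basic building blocks are compactly generated and that the colimit operations used to glue them are internal to the category $\Ghauss$.

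First I would recall the construction of a $\GCW$-complex: it is the colimit (union) of a sequence of equivariant skeleta $X^{(0)} \hookrightarrow X^{(1)} \hookrightarrow \cdots$, where $X^{(n)}$ is obtained from $X^{(n-1)}$ by attaching equivariant $n$-cells, i.e.\ by forming a pushout
\begin{center}
\begin{tikzcd} [row sep=2em, column sep=2em]
\coprod_\alpha G/H_\alpha \times S^{n-1} \arrow[r] \arrow[d] & X^{(n-1)} \arrow[d] \\
\coprod_\alpha G/H_\alpha \times D^n \arrow[r] & X^{(n)}
\end{tikzcd}
\end{center}
along the attaching maps. Next I would verify that each piece entering this diagram is compactly generated: the orbits $G/H_\alpha$ are compactly generated by the cited proposition, the disks $D^n$ and spheres $S^{n-1}$ are compactly generated (being compact Hausdorff, hence compactly generated), and the products $G/H_\alpha \times D^n$ are compactly generated because the product of compactly generated spaces, taken in $\Ghauss$, lands in $\Ghauss$. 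Here I would give $G/H_\alpha \times D^n$ the diagonal $G$-action, which is continuous, so these are genuine $\GCW$-cells.

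The main work is then to push these properties through the two colimit constructions. I would invoke the fact, recorded earlier via \cite[Appendix B]{Swe}, that $\Ghauss$ is cocomplete, so that coproducts, pushouts along closed inclusions, and the sequential colimit defining $X = \colim_n X^{(n)}$ all exist inside $\Ghauss$ and hence are automatically compactly generated weak Hausdorff. Concretely, the argument proceeds by induction on the skeletal filtration: the $0$-skeleton is a coproduct of orbits and thus compactly generated, and if $X^{(n-1)}$ is compactly generated then so is $X^{(n)}$, since it is a pushout in $\Ghauss$ of maps between objects already shown to be compactly generated. Finally the whole complex $X = \underset{n}{\colim}\, X^{(n)}$ is a sequential colimit of closed inclusions in $\Ghauss$, hence again an object of $\Ghauss$.

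The main obstacle I expect is not any single calculation but the careful bookkeeping of the category in which the colimits are formed: the coproduct, pushout, and sequential colimit must all be computed in $\Ghauss$ (equivalently, the underlying colimit in $\operatorname{Top}$ must be $k$-ified and checked to remain weak Hausdorff) rather than in plain topological spaces, since colimits of compactly generated spaces in $\operatorname{Top}$ need not be compactly generated on the nose. Once one grants that $\Ghauss$ is cocomplete---which is exactly what the preceding remark citing \cite[Appendix B]{Swe} provides---this obstacle dissolves, because any object built as a colimit of a diagram in $\Ghauss$ is by construction a $\Ghauss$ space, and in particular compactly generated. The remaining points, namely that orbits and cells are compactly generated and that the attaching maps are continuous and equivariant, are then routine.
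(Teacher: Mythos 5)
Your overall architecture---induction on the skeletal filtration, with the orbits $G/H_\alpha$ compactly generated by \cite[Proposition B.2 (i)]{Swe}, followed by a sequential colimit---matches the paper's proof in outline (the paper phrases the final step as a sequential colimit of finite $\GCW$-subcomplexes rather than of skeleta, but that difference is cosmetic, and your skeletal filtration is if anything the more robust choice). However, there is a genuine gap at exactly the point you flag and then declare dissolved. A $\GCW$-complex is a $G$-space built by pushouts and sequential colimits of ordinary topological spaces, carrying the usual weak topology; the content of the proposition is that \emph{this} topology is already compactly generated. Your closing argument---that since $\Ghauss$ is cocomplete, the coproducts, pushouts, and sequential colimit can be formed in $\Ghauss$, so the result is ``by construction a $\Ghauss$ space''---proves only the tautology that the $k$-ified colimits are compactly generated. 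It does not show that these $\Ghauss$-colimits carry the same topology as the $\GCW$-complex itself, i.e., that no $k$-ification is actually needed. A priori the two topologies could differ, and ruling that out is precisely what the proposition asserts; as written, your proof trivializes the statement rather than establishing it.

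What closes the gap, and what the paper's proof actually uses, is the specific shape of the diagrams: the gluing happens along the compact spaces $G/H_j \times S^{n-1}$ and $G/H_j \times D^n$ (compact Hausdorff, since $G$ is a compact Lie group and $H_j$ is closed), and the maps $X_{n-1}\rightarrow X_n$ and $X_n \rightarrow X_{n+1}$ are closed inclusions. Pushouts of compactly generated weak Hausdorff spaces along closed inclusions with compact gluing data, and sequential colimits along closed inclusions, computed in plain topological spaces, are already compactly generated weak Hausdorff---so the colimit in $\opn{Top}$ coincides with the colimit in $\Ghauss$, with no $k$-ification required. This is why the paper explicitly remarks that the pushout square is one ``of compactly generated spaces, since $S^{n-1}$ and $D^n$ are compact.'' If you replace your appeal to cocompleteness of $\Ghauss$ with these two preservation facts, your induction goes through and becomes essentially the paper's argument.
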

\begin{proof}
Let us first prove this claim for finite $\GCW$-complexes. We can do this by induction on the skeleton. Let $X$ be a finite $\GCW$-complex. The $0$-skeleton, $X_0$ is simply a disjoint union of orbit spaces which are compactly generated as mentioned above. Let us assume that $n-1$-skeleton of $X$, $X_{n-1}$, is compactly generated. Now the $n$-skeleton is obtained via the pushout
\begin{center}
\begin{tikzcd} [row sep=2em, column sep=2em]
\underset{j}{\coprod} G/H_j \times S^{n-1} \arrow[d]\arrow{r} & X_{n-1} \arrow[d]\\
\underset{j}{\coprod} G/H_j \times D^n \arrow[r] & X_n
\end{tikzcd}
\end{center}
Note that this diagram is a pushout square of compactly generated spaces, since $S^{n-1}$ and $D^n$ are compact, hence $X_n$ is compactly generated as well.

Finally, let $X$ be a $\GCW$ complex. Then $X$ can be presented as the colimit of a sequence of finite $\GCW$-complexes $X_0\rightarrow X_1\rightarrow X_2\rightarrow ...$ where each $X_k\rightarrow X_{k+1}$ is a closed inclusion. Since all of the elements of the sequence are compactly generated, so will $X$ be.
\end{proof}

The category of $\Ghauss$ spaces can be endowed with a model structure, which we call the \emph{standard model structure for $G$-spaces} (\cite[Proposition B.7]{Swe}), where:

\begin{itemize}
\item \textbf{weak equivalences} are weak homotopy equivalences of $G$-spaces, that is, $G$-maps $f:X\rightarrow Y$ such that $f^H:X^H\rightarrow Y^H$ is a weak homotopy equivalence for every closed subgroup $H\leq G$;
\item \textbf{fibrations} are Serre fibrations of $G$-spaces, that is, $G$-maps $f:X\rightarrow Y$ such that $f^H:X^H\rightarrow Y^H$ is a Serre fibration for every closed subgroup $H\leq G$;
\item \textbf{cofibrations} are maps which satisfy the left lifting property with respect to acyclic fibrations (i.e. fibrations that are also weak equivalences).
\end{itemize}
Note that, similar as in \ref{quillenms} every $\Ghauss$ space is fibrant in the standard model structure. Additionally, by \cite[Ch. I, Theorem 3.1]{CBMS}, $\GCW$-complexes are fibrant objects. Moreover, analogous to the proofs of \cite[2.4.5, 2.4.6]{Hov} we can conclude:
\begin{proposition}
\label{gincl}
Every cofibration in the standard model structure on $\Ghauss$ spaces is a closed $G$-inclusion.
\end{proposition}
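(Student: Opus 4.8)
The plan is to use the fact that the standard model structure on $\Ghauss$ spaces is cofibrantly generated (it is produced in \cite[Proposition B.7]{Swe} by recognising it as such), with generating cofibrations
\[
I=\{\, G/H\times S^{n-1}\hookrightarrow G/H\times D^{n} \ \mid\ H\leq G \text{ closed},\ n\geq 0 \,\}
\]
(with the convention $S^{-1}=\emptyset$). By the small object argument, every cofibration is a retract of a relative $I$-cell complex, that is, of a transfinite composite of pushouts of coproducts of maps in $I$. Hence it suffices to prove that the class of closed $G$-inclusions contains $I$ and is stable under the four building operations: coproducts, cobase change along arbitrary $G$-maps, transfinite composition, and retracts. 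This is exactly the shape of the argument in \cite[2.4.5, 2.4.6]{Hov}, adapted to the equivariant and weak Hausdorff setting.

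The easy inputs are the membership of $I$ and stability under coproducts and retracts. Each generating map is a $G$-equivariant closed inclusion, since $S^{n-1}\hookrightarrow D^{n}$ is a closed inclusion of compact spaces and the functor $G/H\times(-)$ preserves closed inclusions while carrying the $G$-action on the $G/H$ factor; and a coproduct of closed $G$-inclusions is again one. Stability under retracts is purely formal: given a retract diagram exhibiting a $G$-map $q\colon A\to B$ as a retract of a closed $G$-inclusion $f\colon X\to Y$ with sections $u\colon A\to X$, $u'\colon B\to Y$ and retractions $v,v'$, one checks that $q$ is injective and an embedding because $f$, $u$ and $u'$ are, and that its image is closed because $q(A)=(u')^{-1}(f(X))$ is the preimage of a closed set under a continuous map.

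The main obstacle, and the only genuinely topological content, is stability under cobase change and transfinite composition, both of which rely on the weak Hausdorff hypothesis. For pushouts I would invoke the standard fact that in compactly generated weak Hausdorff spaces the pushout of a closed inclusion $A\rightarrowtail X$ along an arbitrary map $A\to B$ is again a closed inclusion $B\rightarrowtail X\cup_{A}B$; this is false for general topological spaces and holds here precisely because weak Hausdorffness controls the quotient, keeping $B$ a closed subspace with its original topology. For transfinite composites I would show that a (sequential or, more generally, $\lambda$-indexed) colimit of closed $G$-inclusions, formed with the colimit topology, again includes each stage as a closed subspace, once more using that points, and hence the images of earlier stages, are closed. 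Equivariance is automatic throughout, since colimits in $\Ghauss$ are computed on underlying spaces and equipped with the induced compatible $G$-action, and $G/H$ supplies the $G$-action on the cells.

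Putting these together, every relative $I$-cell complex is a closed $G$-inclusion, and since the class of closed $G$-inclusions is retract-closed, so is every cofibration. I expect the delicate point to be the careful verification of the two weak-Hausdorff-dependent preservation statements; everything else is formal bookkeeping once the cofibrantly generated structure and its generating cofibrations are identified.
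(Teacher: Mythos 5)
Your proposal is correct and follows essentially the same route as the paper, which itself gives no details beyond asserting that the claim follows ``analogous to the proofs of \cite[2.4.5, 2.4.6]{Hov}'': exhibit every cofibration as a retract of a relative $I$-cell complex via the small object argument, and check that the closed $G$-inclusions contain the generating cofibrations $G/H\times S^{n-1}\rightarrow G/H\times D^n$ and are stable under coproducts, cobase change, transfinite composition, and retracts. One minor slip in an aside: cobase change of a closed inclusion along an arbitrary map does \emph{not} fail in general topological spaces --- it is in fact a closed inclusion in all of $\operatorname{Top}$; the weak Hausdorff hypothesis is needed rather to ensure that pushouts and transfinite composites of weak Hausdorff spaces along closed inclusions remain weak Hausdorff, so that the colimits in $\Ghauss$ coincide with those computed in $\operatorname{Top}$ --- but this misattribution does not affect the validity of your argument.
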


\begin{remark}
\label{lrem21}
When $K$ is compact and $X$ is compactly generated, the mapping space $\Map_{\Space}(K,X)$ of all continuous maps is compactly generated (see \cite[Proposition 8.2]{Rezk}). If we take spaces $K$ and $X$ to be equipped with an action of a compact Lie group $G$, we will denote with $\Map(K,X)$ the $\Ghauss$ space of all continuous maps with $G$ acting by conjugation. Additionally, given a $\Ghauss$ space $X$ and a closed subgroup $H\leq G$, the space $X^H$ is a closed subspace of $X$ and hence compactly generated (\cite[Proposition A.5, Appendix B]{Swe}), meaning that the space $(\Map(K,X))^H := \Map_H(K,X)$ of all $H$-equivariant continuous maps is compactly generated.
\end{remark}

\begin{proposition}
\label{tfact}
Let
\[
X_0 \rightarrow X_1 \rightarrow ...\rightarrow X_n \rightarrow ..
\]
be a sequence of $\Ghauss$ spaces where the maps $X_i \rightarrow X_{i+1}$ are closed $G$-inclusions. Denote with $X$ the colimit of the sequence. Then for every compact $G$-space $K$ we have a bijection
\begin{equation}
\label{bcolim}
\tau : \underset{n\in\Nat}{\colim}\, \Map(K,X_n)\rightarrow \Map(K, X)
\end{equation}
which is $G$-equivariant. Moreover, for every closed subgroup $H\leq G$, we have a bijection
\[
\tau^H: \underset{n\in\Nat}{\colim}\, \Map_H(K,X_n)\rightarrow \Map_H(K, X)
\]
\end{proposition}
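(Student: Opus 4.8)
The plan is to build $\tau$ from the universal property of the colimit and then check it is a bijection by hand, using crucially that $K$ is compact and that $X$ carries the weak (colimit) topology of a sequence of closed inclusions. First I would record that, since each $X_n\rightarrow X_{n+1}$ is a closed $G$-inclusion, the colimit in $\Ghauss$ is computed as the set-theoretic union $X=\bigcup_n X_n$ equipped with the final topology, and that each structure map $\iota_n:X_n\rightarrow X$ is a closed $G$-embedding. The inclusions $\iota_n$ induce a compatible family $\Map(K,X_n)\rightarrow\Map(K,X)$, $f\mapsto\iota_n\circ f$, which by the universal property of the colimit assembles into the map $\tau$. Because every $\iota_n$ is a $G$-map and $G$ acts by conjugation on all the mapping spaces, this family is $G$-equivariant and hence so is $\tau$.

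The core of the argument is surjectivity of $\tau$, and this is where the main obstacle lies. Given $f:K\rightarrow X$ continuous, the image $f(K)$ is compact, and I claim it is contained in some finite stage $X_n$. This is the key lemma: if $f(K)$ met every complement $X\setminus X_n$, one could choose points $c_j\in f(K)$ lying in strictly increasing stages; the resulting set $S=\{c_j\}$ has finite, hence closed, intersection with each $X_m$ (points are closed since the spaces are weak Hausdorff), so $S$ and all of its subsets are closed in the final topology, making $S$ an infinite closed discrete subset of the compact set $f(K)$---a contradiction. Once $f(K)\subseteq X_n$, the fact that $\iota_n$ is a closed embedding lets me corestrict $f$ to a continuous map $\bar f:K\rightarrow X_n$ with $\iota_n\circ\bar f=f$, so $[\bar f]$ is a preimage of $f$. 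Injectivity is easier: two representatives may be pushed into a common stage $X_N$, and since $\iota_N$ is injective, agreement of their composites in $\Map(K,X)$ forces agreement already in $\Map(K,X_N)$. Together these give the bijection \eqref{bcolim}.

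For the fixed-point statement I would first observe that, since $K$ is compact and each $X_n\rightarrow X_{n+1}$ is a closed inclusion, the induced maps $\Map(K,X_n)\rightarrow\Map(K,X_{n+1})$ are again closed $G$-inclusions: injectivity and the subspace description of the compact-open topology are immediate, and the image $\{f:f(K)\subseteq X_n\}$ is closed because a map missing $X_n$ at some point $k_0$ lies in the open set of maps sending $k_0$ into the open complement $X_{n+1}\setminus X_n$. Consequently $\colim_n\Map(K,X_n)$ is itself a colimit along closed inclusions, so its underlying set is $\bigcup_n\Map(K,X_n)$ and, each stage being $G$-stable, its $H$-fixed points are $\bigcup_n\Map(K,X_n)^H=\bigcup_n\Map_H(K,X_n)$, i.e. $\colim_n\Map_H(K,X_n)$ as a set. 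Since $\tau$ is a $G$-equivariant bijection it restricts to a bijection on $H$-fixed points, and by definition $(\Map(K,X))^H=\Map_H(K,X)$; this restriction is exactly the desired $\tau^H$. The one point demanding care throughout is the compact-support lemma of the second paragraph, as everything else is formal once it is in hand.
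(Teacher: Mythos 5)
Your proposal is correct, and its skeleton matches the paper's proof: build $\tau$ from the structure maps $\iota_n$, get surjectivity from the fact that a map out of a compact $K$ factors through a finite stage, get injectivity by pushing two representatives into a common stage and using injectivity of $\iota_n$, and get equivariance from $\iota_n$ being a $G$-map. The differences are in what you prove versus what the paper outsources. For the key surjectivity step the paper simply cites Rezk's notes (the factorization of compact images through a finite stage), whereas you prove it from scratch via the closed-discrete-subset argument: points are closed by weak Hausdorffness, so an infinite set meeting each $X_m$ finitely is closed and discrete in the final topology, contradicting compactness of $f(K)$; this is exactly the standard proof of the cited fact, and it is sound. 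More substantively, you actually prove the ``moreover'' statement about $\tau^H$, which the paper's proof never addresses: you verify that the induced maps $\Map(K,X_n)\rightarrow\Map(K,X_{n+1})$ are closed inclusions (your compact-open argument for closedness of the image is correct), identify $\colim_n\Map_H(K,X_n)$ with the $H$-fixed subset of $\colim_n\Map(K,X_n)$, and restrict the equivariant bijection $\tau$ to fixed points --- implicitly using that an \emph{equivariant bijection} (not merely an equivariant map) restricts to a bijection on fixed points, since $\tau(hx)=h\tau(x)=\tau(x)$ forces $hx=x$. The paper leaves all of this implicit (the commutation of fixed points with such colimits is only invoked later, via Schwede's Proposition B.1), so your version is the more complete one; for the set-level bijection claimed in the proposition, even the topological closedness of the transition maps is more than needed, as injectivity alone identifies the colimit with the union.
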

\begin{proof}
Let $[f']\in \underset{n\in\Nat}{\colim}\, \Map(K,X_n)$ be an element of the colimit represented by a map $f'\in \Map(K,X_n)$ for some $n\in\Nat$. By \cite[Proposition A.14]{Swe} we have a closed inclusion $i_n: X_n\rightarrow X$ which will also be a $G$-map since all of the maps in the sequence are $G$-maps, hence, $\tau$ can be viewed as a map sending an element $[f']$ to the map $i_n\circ f' \in \Map(K,X)$. Additionally, since every map $f\in \Map(K,X)$ factors through some $X_n$ (see \cite[10.14]{Rezk}) i.e. $f=i_n\circ f'$ for some $f'\in\Map(K,X_n)$, the element of $\underset{n\in\Nat}{\colim}\, \Map(K,X_n)$ represented by $f'$ maps to $f$ by $\tau$, hence $\tau$ is surjective. Furthermore, let $[f_1],[f_2]$ be two non-equal elements of $\underset{n\in\Nat}{\colim}\, \Map(K,X_n)$. For large enough $n\in\Nat$ we may represent these elements by maps $f_1,f_2:\Map(K,X_n)$. Then these elements are sent by $\tau$ to the maps $i_n\circ f_1$ and $i_n\circ f_2$. Since $f_1\neq f_2$, then $i_n\circ f_1\neq i_n\circ f_2$, hence $\tau$ is injective, and a bijection.

What is left to check is that $\tau$ is a $G$-map. Again, let $[f]$ be an element of $\underset{n\in\Nat}{\colim}\, \Map(K,X_n)$ represented by a map $f\in \Map(K,X_n)$, and let $g\in G$. Recall that $\tau([f]) = i_n\circ f$. Since the induced maps $\Map(K,X_i)\rightarrow\Map(K,X_{i+1})$ are closed $G$-inclusions, the element $g[f]$ corresponds to the element $[gf]$. Since all of the maps $X_k\rightarrow X_{k+1}$ are closed $G$-inclusion, so will $i_n$ be, hence $\tau(g[f]) = i_n\circ (gf)=g(i_n\circ f)= g\tau([f])$ i.e. $\tau$ is $G$-equivariant.
\end{proof}

\begin{remark}
Since we are working in the category $\Ghauss$ we ought to prove that $\tau$ is in fact a continuous map in \ref{bcolim}, but this part has been omitted as to not stray too far away from the topic of $\Ghauss$ spaces. Nonetheless, proving that $\tau$ is continuous can be left as an exercise to the reader.
\end{remark}

Before moving forward let us recall the definition of the homotopy groups of equivariant spaces.
\begin{definition}
Let $H\leq G$ be a closed subgroup of a compact Lie group $G$, let $X$ be a $G$-space and let $x\in X$ be a $G$-fixed point. Then we define
\[
\pi^H_n(X,x) := \pi_n(X^H,x)
\]
for $n\in\Nat$.
\end{definition}
As mentioned in \ref{lrem21}, given a $\Ghauss$ space $X$ and a closed subgroup $H\leq G$, the space $X^H$ is a closed subspace of $X$ and hence compactly generated. Then the closed inclusion of $G$-spaces $X\rightarrow Y$ gives us a closed inclusion $X^H\rightarrow Y^H$. Additionally, taking $H$-fixed points commutes with filtered colimits (and, in particular, sequential colimits) along $G$-maps that are closed inclusions, \cite[Proposition B.1 (ii)]{Swe}, hence we obtain the following lemma.

\begin{lemma}
\label{lc24}
Let $H\leq G$ be a closed subgroup. Then for a sequence
\[
X_0 \rightarrow X_1 \rightarrow ...\rightarrow X_n \rightarrow ..
\]
of $\Ghauss$ spaces where the maps $X_i \rightarrow X_{i+1}$ are closed $G$-inclusions and every $G$-fixed point $x\in X_0$, the map
\[
\underset{n\in\Nat}{\colim}\, \pi^H_k(X_n, x) \rightarrow \pi^H_k(\underset{n\in\Nat}{\colim}\, X_n, x)
\]
is an isomorphism for every $k\geq 0$.
\end{lemma}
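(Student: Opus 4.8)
The plan is to reduce the statement to the classical fact about ordinary homotopy groups and sequential colimits along closed inclusions, recalled in the introduction (\cite[Ch. 8, section 4]{MCC}).

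First I would unwind the definition $\pi^H_k(-,x) = \pi_k((-)^H, x)$. Since $x\in X_0$ is a $G$-fixed point it is in particular $H$-fixed, so $x\in X_0^H \subseteq X_n^H$ for every $n$, and $x$ also lies in $(\underset{n\in\Nat}{\colim}\, X_n)^H$; thus the basepoint is consistently defined at every finite stage and in the colimit, and all the maps in sight are based. Next, I would invoke the fact, noted just before the statement, that taking $H$-fixed points commutes with sequential colimits along closed $G$-inclusions (\cite[Proposition B.1 (ii)]{Swe}), which yields a natural homeomorphism
\[
\Bigl(\underset{n\in\Nat}{\colim}\, X_n\Bigr)^H \cong \underset{n\in\Nat}{\colim}\, X_n^H .
\]
Moreover, each transition map $X_n^H \rightarrow X_{n+1}^H$ is itself a closed inclusion, being the restriction of a closed $G$-inclusion to fixed points, and each $X_n^H$ is compactly generated by Remark \ref{lrem21}. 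Hence the right-hand side is a sequential colimit of closed inclusions of compactly generated spaces.

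With these identifications in place, the map in question becomes the natural comparison map
\[
\underset{n\in\Nat}{\colim}\, \pi_k(X_n^H, x) \rightarrow \pi_k\Bigl(\underset{n\in\Nat}{\colim}\, X_n^H, x\Bigr),
\]
which is an isomorphism by the cited classical result for sequential colimits along closed inclusions. Tracing through the identification $\pi^H_k = \pi_k\circ(-)^H$ recovers exactly the natural map in the statement, and naturality of the fixed-point functor guarantees that the transition maps on the left are the ones induced by the inclusions $X_n\rightarrow X_{n+1}$, completing the argument for every $k\geq 0$.

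The main obstacle I expect is bookkeeping rather than analysis: one must check that the map appearing in the lemma coincides, under the homeomorphism above, with the classical comparison map, and that the basepoints match at every stage. The genuinely topological input — that a map out of the compact sphere $S^k$, and a homotopy out of $S^k\times I$, factors through a finite stage of the colimit — is entirely absorbed into the cited classical statement. If one preferred a self-contained argument avoiding \cite{MCC}, one could instead mirror the proof of Proposition \ref{tfact} with $K = S^k$, using compactness of $S^k$ and $S^k\times I$ together with \cite[10.14]{Rezk} to establish surjectivity and injectivity directly on the fixed-point spaces $X_n^H$.
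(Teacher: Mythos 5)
Your proof is correct and takes essentially the same route as the paper's: both reduce the statement by commuting the $H$-fixed-point functor past the sequential colimit (via \cite[Proposition B.1 (ii)]{Swe}) and then apply the classical comparison isomorphism for sequential colimits along closed inclusions from \cite[Ch.~8, section~4]{MCC}. Your version merely spells out the basepoint and naturality bookkeeping that the paper leaves implicit.
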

\begin{proof}
Let $X$ be the colimit of the upper sequence. Since taking $H$-fixed points commutes with sequential colimits, we have $\underset{n\in\Nat}{\colim}\, X_n^H = X^H$. The statement now follows from the result in \cite[Ch. 8, section 4]{MCC} stating that $\underset{n\in\Nat}{\colim}\, \pi_k(X^H_n, x) \rightarrow \pi_k(X^H, x)$ is an isomorphism.
\end{proof}

\begin{proposition}
\label{lc25}
Let
\[
X_0 \rightarrow X_1 \rightarrow ...\rightarrow X_n \rightarrow ..
\]
be a sequence of $\Ghauss$ spaces where the maps $X_i \rightarrow X_{i+1}$ are closed $G$-inclusions. Then the colimit $\underset{n\in\Nat}{\colim}\, X_n$ in the category of $\Ghauss$ spaces is also the homotopy colimit with respect to the standard model structure on $G$-spaces.
\end{proposition}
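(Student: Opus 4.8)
The plan is to compare the ordinary colimit with the explicit model of the homotopy colimit supplied by Example \ref{comphocolim}, and to show that the natural comparison map between the two is a weak equivalence. Writing $X = \colim_n X_n$ and applying Example \ref{comphocolim} to the standard model structure on $\Ghauss$ spaces, the homotopy colimit of the sequence is computed as $\hocolim_n X_n = \colim_n CX_n$, where the $CX_n$ assemble into a cofibrant replacement of the diagram, fitting into a commutative ladder with weak equivalences $CX_n \xrightarrow{\sim} X_n$ and structure maps $CX_n \rightarrowtail CX_{n+1}$ that are cofibrations. The vertical weak equivalences $CX_n \to X_n$ form a map of diagrams and hence induce the natural comparison map
\[
\phi : \hocolim_n X_n = \colim_n CX_n \longrightarrow \colim_n X_n = X .
\]
It then suffices to prove that $\phi$ is a weak equivalence in the standard model structure.

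First I would observe that both rows of the ladder are sequences of closed $G$-inclusions: the bottom maps $X_n \to X_{n+1}$ are so by hypothesis, while the top maps $CX_n \to CX_{n+1}$ are cofibrations and hence closed $G$-inclusions by Proposition \ref{gincl}. This is the crucial structural input, since it places both sequences in exactly the setting where $H$-fixed points commute with the sequential colimit (\cite[Proposition B.1 (ii)]{Swe}) and where the colimit comparison for homotopy groups of Lemma \ref{lc24} is available.

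Next, to see that $\phi$ is a weak equivalence I would check that $\phi^H$ is a weak homotopy equivalence for every closed subgroup $H \leq G$. Because $H$-fixed points commute with these colimits, $\phi^H$ is identified with the map $\colim_n (CX_n)^H \to \colim_n (X_n)^H$. Fixing a basepoint drawn from a finite stage, I would then assemble the commutative square whose corners are $\colim_n \pi^H_k(CX_n)$, $\pi^H_k(\colim_n CX_n)$, $\colim_n \pi^H_k(X_n)$ and $\pi^H_k(\colim_n X_n)$, with horizontal maps the comparison isomorphisms of Lemma \ref{lc24} and vertical maps induced by $CX_n \to X_n$ and by $\phi$. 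The left vertical map is an isomorphism because each $CX_n \xrightarrow{\sim} X_n$ is a weak equivalence, so induces isomorphisms $\pi^H_k(CX_n) \cong \pi^H_k(X_n)$ that are preserved by the colimit; since the two horizontal maps are isomorphisms, the square forces the right vertical map—the effect of $\phi^H$ on homotopy groups—to be an isomorphism too, with the analogous basepoint-free argument treating $\pi_0$.

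The main obstacle I expect is the bookkeeping of basepoints. Lemma \ref{lc24} is stated for a $G$-fixed basepoint in $X_0$, whereas verifying that $\phi^H$ is a weak homotopy equivalence demands control of $\pi^H_k$ at every basepoint of the fixed-point colimit. The resolution is twofold: any such basepoint already lies in some $(X_m)^H$, so after reindexing the sequence to begin at $m$—which changes neither the colimit nor the homotopy colimit—it sits in the initial term; and the isomorphism underlying Lemma \ref{lc24} is, by its proof, just the non-equivariant statement of \cite[Ch. 8, section 4]{MCC} applied to the sequence of fixed-point spaces $X_n^H$, which requires only that the basepoint lie in the initial fixed-point space and not that it be $G$-fixed. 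With these observations the square applies at every basepoint, so $\phi^H$ is a weak homotopy equivalence for all closed $H \leq G$, whence $\phi$ is a weak equivalence and the ordinary colimit indeed represents the homotopy colimit.
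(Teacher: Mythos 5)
Your proof is correct, and its skeleton coincides with the paper's: both replace the sequence by the cofibrant ladder of Example \ref{comphocolim}, observe that the top row consists of closed $G$-inclusions (the paper states this directly, you deduce it from Proposition \ref{gincl}), pass to $H$-fixed points using \cite[Proposition B.1 (ii)]{Swe} so that the fixed-point colimits compute $CX^H$ and $X^H$, and reduce to showing that the comparison map $\phi^H\colon \underset{n\in\Nat}{\colim}\,(CX_n)^H \rightarrow \underset{n\in\Nat}{\colim}\,(X_n)^H$ is a weak homotopy equivalence. The one genuine divergence is the final step: the paper simply cites \cite[Proposition A.17]{Swe}, which says exactly that a levelwise weak equivalence between sequences of closed embeddings of compactly generated spaces induces a weak equivalence on colimits, whereas you re-derive this special case from Lemma \ref{lc24} (that is, from May's $\colim \pi_k \cong \pi_k\,\colim$ theorem) via the four-corner commutative square. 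Your route is more self-contained but forces you to confront the basepoint bookkeeping, and you resolve it correctly: every point of the colimit of closed inclusions lies in a finite stage, reindexing changes neither colimit nor homotopy colimit, and the proof of Lemma \ref{lc24} only uses that the basepoint lies in the initial fixed-point space, not that it is $G$-fixed; the citation route buys brevity by hiding precisely this bookkeeping inside Schwede's proposition. One minor difference of emphasis: the paper spends a paragraph verifying by hand that the induced map $f\colon CX\rightarrow X$ is $G$-equivariant, a point you pass over in silence; since the colimits are formed in the category of $G$-spaces and the maps $CX_n\rightarrow X_n$ constitute a map of diagrams there, equivariance of $\phi$ is automatic by functoriality, so no gap results.
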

\begin{proof}
The homotopy colimit of the upper sequence is computed by taking the cofibrant replacement of the sequence in the projective model structure and then computing the colimit. In particular, we arrive to the familiar commutative diagram
\begin{center}
\begin{tikzcd} [row sep=2em, column sep=2em]
X_1 \arrow[r] & X_2 \arrow[r] & ... \arrow[r] & X_n \arrow[r] & ... \\
CX_1 \arrow[r]\arrow[u,"f_1"] & CX_2 \arrow[r]\arrow[u,"f_2"] & ... \arrow[r] & CX_n \arrow[r]\arrow[u,"f_n"] & ...
\end{tikzcd}
\end{center}
where the maps $f_i:CX_i\rightarrow X_i$ are $G$-weak homotopy equivalences and where $CX_i\rightarrow CX_{i+1}$ are closed $G$-inclusions. Let $X$ and $CX$ be the colimits of the upper and the lower sequence respectively. Additionally, we have an induced map $f:CX\rightarrow X$ and we want to show that this map is a $G$-map. Let $x\in CX$, $y\in X$ such that $y=f(x)$ and let $g\in G$. Additionally, let $n\in\Nat$ be great enough so that $x$, $gx$, $y$, $gy$ are represented by the images of some elements $x', gx'\in CX_n$ and $y',gy'\in X_n$. Since $f(x)=y$ by assumption, we may deduce $f_n(x')=y'$. Since the upper diagram is a commutative diagram of $G$-maps, we have $f_n(gx')=gf_n(x')=gy'$ i.e. $f(gx)=gy=gf(x)$. In other words, $f$ is a $G$-equivariant map.

In order to show further that $f$ is a $G$-weak homotopy equivalence, let us pass to the diagram of $H$-fixed points, with $H\leq G$ a closed subgroup. We arrive to the following diagram
\begin{center}
\begin{tikzcd} [row sep=2em, column sep=2em]
X_1^H \arrow[r] & X_2^H \arrow[r] & ... \arrow[r] & X_n^H \arrow[r] & ... \\
CX_1^H \arrow[r]\arrow[u,"f_1^H"] & CX_2^H \arrow[r]\arrow[u,"f_2^H"] & ... \arrow[r] & CX_n^H \arrow[r]\arrow[u,"f_n^H"] & ...
\end{tikzcd}
\end{center}
where both sequences are sequences of closed inclusions of compactly generated spaces, and where the maps $f_i^H$ are all weak homotopy equivalences. Since taking fixed points commutes with filtered colimits, and in particular, sequential colimits \cite[Proposition B.1]{Swe}, the colimits of the upper and lower sequence are $X^H$ and $CX^H$ respectively, and the induced map on colimits is exactly $f^H$. Then, by \cite[Proposition A.17]{Swe} the map $f^H:CX^H\rightarrow X^H$ is a weak homotopy equivalence as well, hence, $f$ is a $G$-weak homotopy equivalence. Homotopy colimits are unique up to an isomorphism in the homotopy category of the model category, hence, the proof is finished since $f$ is a weak equivalence in the standard model structure.
\end{proof}

\begin{remark}
The reader might be familiar with the fact that one model for the homotopy colimit of a sequence of spaces $X_0\rightarrow X_1\rightarrow X_2\rightarrow ...$ is the \emph{mapping telescope}. Additionally, it is not hard to prove that, in the case where all of the maps in the sequence are closed inclusions, the mapping telescope is homotopy equivalent to the colimit of the sequence. This coincides with the results given above. Although this holds for compactly generated (non-equivariant) spaces, one can extend this result to the equivariant setting.
\end{remark}

\begin{corollary}
\label{lc26}
Let $K$ be a compact $G$-space and let
\[
X_0 \rightarrow X_1 \rightarrow ...\rightarrow X_n \rightarrow ..
\]
be a sequence of $\Ghauss$ spaces where the maps $X_i \rightarrow X_{i+1}$ are closed $G$-inclusions. Then the map
\[
\underset{n\in\Nat}{\colim}\, \Map(K, X_n) \rightarrow \Map( K, \underset{n\in\Nat}{\colim}\, X_n)
\]
is a weak homotopy equivalence of $G$-spaces.
\end{corollary}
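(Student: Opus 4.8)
The plan is to recognise that the map in the statement is precisely the natural comparison map $\tau$ of Proposition \ref{tfact}. That proposition already tells us that $\tau$ is a $G$-equivariant bijection whose restriction $\tau^H$ to $H$-fixed points is a bijection for every closed subgroup $H\leq G$. Since a map of $G$-spaces is a $G$-weak homotopy equivalence exactly when each of its maps on $H$-fixed points is a weak homotopy equivalence, and since $\Map(K,X_n)^H=\Map_H(K,X_n)$, it suffices to prove that each
\[
\tau^H:\underset{n\in\Nat}{\colim}\,\Map_H(K,X_n)\rightarrow\Map_H(K,X)
\]
induces isomorphisms on all homotopy groups at every basepoint, together with a bijection on $\pi_0$; as $\tau^H$ is already a set-theoretic bijection, the remaining content is purely homotopical.

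First I would check that post-composition with the closed $G$-inclusions $X_n\hookrightarrow X_{n+1}$ induces closed $G$-inclusions $\Map(K,X_n)\hookrightarrow\Map(K,X_{n+1})$. Injectivity and $G$-equivariance (for the conjugation action) are immediate, the image $\{\,g:g(K)\subseteq X_n\,\}=\bigcap_{x\in K}\opn{ev}_x^{-1}(X_n)$ is closed because $X_n$ is closed and each evaluation map is continuous, and compactness of $K$ makes this a topological embedding. Taking $H$-fixed points yields a sequence of closed inclusions of compactly generated spaces $Y_n:=\Map_H(K,X_n)$ whose colimit is $(\colim_n\Map(K,X_n))^H$, since $H$-fixed points commute with sequential colimits along closed inclusions (\cite[Proposition B.1 (ii)]{Swe}); under this identification $\tau^H$ is the natural map $\colim_n Y_n\to\Map_H(K,X)$. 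By the colimit comparison for homotopy groups underlying Lemma \ref{lc24} (\cite[Ch. 8, section 4]{MCC}), applied now to the sequence $(Y_n)$, the canonical map $\colim_n\pi_k(Y_n)\xrightarrow{\ \sim\ }\pi_k(\colim_n Y_n)$ is an isomorphism at every basepoint.

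Then I would compute the target. Using the exponential law for compactly generated spaces, $\Map(S^k\times K,X)\cong\Map(S^k,\Map(K,X))$, and restricting to the relevant equivariant and based classes, one identifies $\pi_k(\Map_H(K,X),f)$ with the set of $H$-homotopy classes of $H$-maps out of the compact $G$-space $S^k\times K$ (with $S^k$ carrying the trivial action) that restrict to $f$ on the basepoint slice $\{*\}\times K$. Since every map and every homotopy out of a compact space into $X=\colim_n X_n$ factors through a finite stage (\cite[10.14]{Rezk}, exactly as in the proof of Proposition \ref{tfact}), these classes are already detected at finite stages, which gives an isomorphism $\colim_n\pi_k(Y_n)\xrightarrow{\ \sim\ }\pi_k(\Map_H(K,X))$.

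Finally, naturality identifies the composite of the isomorphism of the second paragraph with $\tau^H_\ast$ as the isomorphism of the third, so $\tau^H_\ast$ is an isomorphism on $\pi_k$ at each basepoint; the same factoring argument with $k=0$ supplies the bijection on $\pi_0$. Hence $\tau^H$ is a weak homotopy equivalence for every closed $H\leq G$, and therefore $\tau$ is a $G$-weak homotopy equivalence. I expect the main obstacle to be the third paragraph: arranging the exponential law with the correct equivariance and the correct based/relative conditions, and verifying that the factoring of maps and homotopies respects the basepoint slice $\{*\}\times K$ for an arbitrary $H$-fixed (not necessarily $G$-fixed) basepoint. A secondary technical point is confirming that the induced maps on mapping spaces are genuine embeddings and not merely continuous injections.
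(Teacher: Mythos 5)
Your proposal is correct, but it takes a genuinely different (and more self-contained) route than the paper. The paper gives no explicit argument for this corollary: it is positioned as an immediate consequence of Proposition \ref{tfact}, the point being that the natural map $\tau$ is a continuous $G$-equivariant bijection restricting to bijections $\tau^H$ on all fixed points --- implicitly, a $G$-homeomorphism (with continuity relegated to a remark and left as an exercise), which trivially yields the weak equivalence. You instead prove the weak equivalence directly at the level of homotopy groups: you verify that postcomposition gives closed $G$-inclusions $\Map(K,X_n)\hookrightarrow\Map(K,X_{n+1})$, commute $H$-fixed points past the sequential colimit via \cite[Proposition B.1 (ii)]{Swe}, apply the May-style comparison $\colim_n\pi_k(Y_n)\cong\pi_k(\colim_n Y_n)$, and then compute $\pi_k(\Map_H(K,X))$ by the exponential law and factorization of maps and homotopies out of the compact space $S^k\times K$ through finite stages. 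What your route buys is precisely the step the paper glosses over: a continuous equivariant bijection is \emph{not} in general a weak homotopy equivalence, so without verifying that $\tau^H$ is a homeomorphism (inverse continuity is the nontrivial part), some argument like yours is actually needed; your factorization argument supplies the isomorphisms on $\pi_k$ and the $\pi_0$ bijection without ever invoking continuity of $\tau^{-1}$. Indeed, Proposition \ref{tfact} enters your proof only for basepoint bookkeeping, and even that is recoverable from the factorization, so your argument is essentially independent of it. Two minor quibbles: compactness of $K$ is not what makes $\Map(K,X_n)\rightarrow\Map(K,X_{n+1})$ an embedding --- that follows for any closed subspace $X_n\subseteq X_{n+1}$ from comparing compact-open topologies and the fact that $k$-ification commutes with passage to closed subspaces (compactness of $K$ is instead what you need for the factorization through finite stages and for the mapping spaces to be compactly generated); and when citing the comparison of homotopy groups you should note that a path or homotopy in $\Map_H(K,X)$ is adjoint to a map out of $K\times I$ (again compact), which is exactly why injectivity on $\pi_0$ and $\pi_k$ is detected at finite stages.
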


\begin{corollary}
\label{hmap}
Let $K$ be a compact $\GCW$-complex and let
\[
X_0 \rightarrow X_1 \rightarrow ...\rightarrow X_n \rightarrow ..
\]
be a sequence of $\Ghauss$ spaces. Then the map
\[
\underset{n\in\Nat}{\hocolim}\, \Map(K,X_n)\rightarrow \Map(K, \underset{n\in\Nat}{\hocolim}\, X_n)
\]
is a weak equivalence of $G$-spaces.
\end{corollary}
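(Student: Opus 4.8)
The plan is to reduce the general statement to the closed-inclusion situation already handled in Corollary \ref{lc26}, using cofibrant replacement of diagrams together with the homotopy invariance of the homotopy colimit. Concretely, I would first form the cofibrant replacement of the $\Nat$-diagram $X_\bullet$ in the projective model structure, exactly as in Example \ref{comphocolim}: this produces a sequence $CX_0 \to CX_1 \to \cdots$ in which every transition map $CX_n \to CX_{n+1}$ is a cofibration, hence a closed $G$-inclusion by Proposition \ref{gincl}, together with compatible $G$-weak equivalences $CX_n \xrightarrow{\sim} X_n$, and with $\hocolim_n X_n = \colim_n CX_n$.

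The core is then a chain of comparisons. Applying $\Map(K,-)$ to the sequence $CX_\bullet$ of closed $G$-inclusions yields a sequence of closed $G$-inclusions $\Map(K, CX_n) \to \Map(K, CX_{n+1})$ (as established in the proof of Proposition \ref{tfact}), so Proposition \ref{lc25} gives $\hocolim_n \Map(K, CX_n) = \colim_n \Map(K, CX_n)$, and Corollary \ref{lc26} supplies a $G$-weak equivalence $\colim_n \Map(K, CX_n) \xrightarrow{\sim} \Map(K, \colim_n CX_n) = \Map(K, \hocolim_n X_n)$. On the other side, the levelwise $G$-weak equivalences $\Map(K, CX_n) \to \Map(K, X_n)$ (furnished by the key lemma below) form a weak equivalence of $\Nat$-diagrams, and since the homotopy colimit, being a derived functor, is invariant under levelwise weak equivalences, they induce a $G$-weak equivalence $\hocolim_n \Map(K, CX_n) \xrightarrow{\sim} \hocolim_n \Map(K, X_n)$. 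Combining these, the zig-zag
\[
\hocolim_n \Map(K, X_n) \xleftarrow{\sim} \hocolim_n \Map(K, CX_n) \xrightarrow{\sim} \Map(K, \hocolim_n X_n)
\]
identifies the two sides; one checks by naturality of all the constructions involved that the right-hand leg equals the canonical comparison map composed with the left-hand leg, whence that comparison map is a $G$-weak equivalence by the two-out-of-three property.

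The main obstacle, and the one genuinely new input, is the key lemma: for $K$ a compact $\GCW$-complex the functor $\Map(K,-)$ carries $G$-weak equivalences to $G$-weak equivalences. I would deduce this from the fact that the standard model structure on $\Ghauss$ is $G$-topological: since $K$ is cofibrant, the functor $-\times K$ preserves cofibrations and acyclic cofibrations, so its right adjoint $\Map(K,-)$ is right Quillen and therefore preserves weak equivalences between fibrant objects; as every $\Ghauss$ space is fibrant, $\Map(K,-)$ preserves all $G$-weak equivalences. If one prefers to avoid the monoidal-model-category machinery, the same lemma can be proved by hand: testing on $H$-fixed points and using the identity $\Map(K,A)^H = \Map_H(K,A)$ together with restriction along $H \leq G$ reduces it to showing that $\Map_H(L,-)$ preserves weak equivalences for $L$ a finite $H$-CW complex. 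This follows by induction on the $H$-cells of $L$, the base case resting on the adjunction identity $\Map_H(H/H'' \times D^n, A) \cong \Map(D^n, A^{H''}) \simeq A^{H''}$ and the defining property of $G$-weak equivalences on the fixed-point spaces $A^{H''}$ (valid since $H'' \leq H \leq G$ is itself a closed subgroup of $G$), and the inductive step on the fact that $\Map_H(-,A)$ sends the cell-attachment pushouts to pullbacks of fibrations, combined with the gluing lemma.

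I expect essentially all of the real work to sit in this lemma; once it is in hand, the remainder of the argument is bookkeeping with Example \ref{comphocolim}, Proposition \ref{lc25}, and Corollary \ref{lc26}, plus the verification that the resulting zig-zag computes the stated natural comparison map.
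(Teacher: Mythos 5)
Your proposal is correct and follows essentially the same route as the paper: cofibrant replacement of the sequence in the projective model structure, Proposition \ref{gincl} to get closed $G$-inclusions, and then Proposition \ref{lc25} and Corollary \ref{lc26}, mediated by the fact that $\Map(K,-)$ preserves $G$-weak homotopy equivalences. The only difference is that the paper simply asserts this last preservation property, whereas you supply a proof of it (via the right Quillen/fibrancy argument or cell induction) and spell out the zig-zag and two-out-of-three bookkeeping that the paper leaves implicit.
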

\begin{proof}
The homotopy colimit is computed by taking the colimit of the cofibrant replacement of the initial diagram with respect to the projective model structure. In our case, that would include replacing the sequence by a weakly equivalent one involving $G$-cofibrations. The functor $\Map(K,\_)$ with $K$ a compact $\GCW$-complex preserves weak homotopy equivalences of $G$-spaces, hence we can assume that the maps $X_n \rightarrow X_{n+1}$ are $G$-cofibrations, and therefore closed inclusions by \ref{gincl}. Then each map $\Map(K,X_n) \rightarrow \Map(K,X_{n+1})$ is also a closed inclusion. The desired result follows from \ref{lc25} and \ref{lc26}.
\end{proof}
\begin{proposition}
\label{hocopi}
Let
\[
X_0 \rightarrow X_1 \rightarrow ...\rightarrow X_n \rightarrow ..
\]
be a sequence of $\Ghauss$ spaces and let  $H\leq G$ be a closed subgroup. Then the map
\[
\underset{n\in\Nat}{\colim}\, \pi^H_k(X_n) \rightarrow \pi^H_k(\underset{n\in\Nat}{\hocolim}\, X_n)
\]
is an isomorphism.
\end{proposition}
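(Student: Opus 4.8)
The plan is to reduce the general case to the special case of closed $G$-inclusions settled in Lemma \ref{lc24}, exploiting the fact that $\pi^H_k$ is a homotopy-invariant functor while the homotopy colimit is, by Example \ref{comphocolim}, computed from a sequence of closed inclusions. Thus $\pi^H_k$ cannot distinguish the given sequence from a weakly equivalent replacement, and we are free to replace the $X_n$ by objects for which Lemma \ref{lc24} applies directly.

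First I would apply Example \ref{comphocolim} to pass to the cofibrant replacement
\[
CX_0 \rightarrow CX_1 \rightarrow \dots \rightarrow CX_n \rightarrow \dots
\]
of the sequence in the projective model structure on $\Nat$-indexed diagrams. By construction each map $CX_n \rightarrow CX_{n+1}$ is a cofibration, hence a closed $G$-inclusion by Proposition \ref{gincl}, each structure map $f_n : CX_n \rightarrow X_n$ is a $G$-weak homotopy equivalence, and
\[
\underset{n\in\Nat}{\hocolim}\, X_n = \underset{n\in\Nat}{\colim}\, CX_n .
\]
Because each $f_n$ is a $G$-weak equivalence, the map $f_n^H : CX_n^H \rightarrow X_n^H$ is a weak homotopy equivalence, so $f_n$ induces an isomorphism $\pi^H_k(CX_n) \xrightarrow{\cong} \pi^H_k(X_n)$ for every $k$; these isomorphisms are natural in $n$ and therefore assemble into an isomorphism $\underset{n\in\Nat}{\colim}\, \pi^H_k(CX_n) \xrightarrow{\cong} \underset{n\in\Nat}{\colim}\, \pi^H_k(X_n)$.

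Now the sequence $CX_0 \rightarrow CX_1 \rightarrow \dots$ consists of closed $G$-inclusions, so Lemma \ref{lc24} applies and gives an isomorphism
\[
\underset{n\in\Nat}{\colim}\, \pi^H_k(CX_n) \xrightarrow{\cong} \pi^H_k\bigl(\underset{n\in\Nat}{\colim}\, CX_n\bigr) = \pi^H_k\bigl(\underset{n\in\Nat}{\hocolim}\, X_n\bigr).
\]
Stringing these two isomorphisms together produces an isomorphism between $\underset{n\in\Nat}{\colim}\, \pi^H_k(X_n)$ and $\pi^H_k(\underset{n\in\Nat}{\hocolim}\, X_n)$, which is the content of the statement.

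The point requiring genuine care is checking that the composite just constructed is precisely the natural map appearing in the statement, and not merely some abstract isomorphism between the two groups. To see this one verifies that, for each $n$, the canonical map $X_n \rightarrow \underset{n\in\Nat}{\hocolim}\, X_n$ in the homotopy category factors as $X_n \xleftarrow{f_n} CX_n \rightarrow \underset{n\in\Nat}{\colim}\, CX_n$, which is exactly the commutativity encoded in the cofibrant-replacement diagram of Example \ref{comphocolim}; applying the homotopy-invariant functor $\pi^H_k$ then identifies the map of the statement with our composite. A secondary issue is the treatment of basepoints, since Lemma \ref{lc24} is phrased relative to a $G$-fixed point of $X_0$: one fixes such a basepoint, transports it through the weak equivalences $f_n$ to compatible basepoints of the $CX_n$, and observes that every map above is basepoint-preserving. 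This bookkeeping is routine and does not affect the argument.
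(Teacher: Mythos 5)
Your proof follows the paper's own argument exactly: pass to the cofibrant replacement $\{CX_n\}$ in the projective model structure as in Example \ref{comphocolim}, note that the levelwise $G$-weak equivalences $f_n$ induce an isomorphism $\underset{n\in\Nat}{\colim}\,\pi^H_k(CX_n)\cong\underset{n\in\Nat}{\colim}\,\pi^H_k(X_n)$, and apply Lemma \ref{lc24} to the sequence of closed $G$-inclusions $CX_n$ whose colimit is the homotopy colimit. The extra care you take in checking that the composite is the natural map of the statement and in handling basepoints goes slightly beyond the paper's write-up but does not change the route.
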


\begin{proof}
As in the previous proof, the homotopy colimit is computed by replacing the initial sequence by a weakly equivalent one involving $G$-cofibrations, which are, in particular, closed inclusions. Let us denote with $\{CX_n\}_{n\in\mathbb{N}}$ the cofibrant replacement diagram of $\{X_n\}_{n\in\mathbb{N}}$. The maps $CX_n\rightarrow X_n$ are all weak homotopy equivalences of $G$-spaces, hence by \ref{lc24} we have have an isomorphism
\begin{align*}
\underset{n\in\Nat}{\colim}\, \pi^H_k(X_n) & \cong\underset{n\in\Nat}{\colim}\, \pi^H_k(CX_n) \rightarrow \pi^H_k(\underset{n\in\Nat}{\colim}\, CX_n) \\
 & \cong \pi^H_k(\underset{n\in\Nat}{\colim}\, X_n)
\end{align*}
for every $k\geq 0$. With this the proof is finished.
\end{proof}

\section{Further topics and applications}

Homotopy limits and colimits represent the variants of the notion of the limits and colimits in the homotopy category. Furthermore, the notion of homotopy limits and colimits coincide with the notion of limits and colimits in the underlying $\infty$-category of a model category (see \cite[Chapter 4]{HTT} for more details), hence, their study in the category of $G$-spaces leads to better understanding of the $\infty$-category of $G$-spaces. In particular, the results presented in this paper were used to prove that the $\infty$-category of $G$-spaces is a presentable $\infty$-category in the PhD thesis of the author. In more detail, the author proved that the $\infty$-category of $G$-spaces is generated under filtered colimits by the compact objects, which are further proved to be finite $\GCW$-complexes. To achieve this, the results such as commutativity of homotopy colimits and mapping spaces were used.

Moreover, these constructions and equivalences extend naturally to the study of $G$-spectra. As such, the techniques and results of this paper may serve as valuable tools for further developments in equivariant stable homotopy theory.


\noindent Universoty pf Belgrade, Faculty of Mathematics, Studentski trg 16, 11158 Belgrade, Serbia\\
E-mail: aleksandar.miladinovic@matf.bg.ac.rs\\
ORCID iD: \url{https://orcid.org/0009-0007-3171-3571}

\end{document}